\theoremstyle{thmstyleone}
\newtheorem{thm}{Theorem}
\newtheorem{lem}{Lemma}
\newtheorem{cor}{Corollary}
\theoremstyle{thmstylethree}
\begin{document}

\title[Double Traversals in Boundary Subaisles]
{Double Traversals in Boundary Subaisles:
Implications~for~Two-Block~Layouts}

\author*[1]{\fnm{George} \sur{Dunn}}\email{george.dunn@uon.edu.au}
\author[1]{\fnm{Elizabeth} \sur{Stojanovski}}
\author[1]{\fnm{Bishnu} \sur{Lamichhane}}
\author[2]{\fnm{Hadi} \sur{Charkhgard}}
\author[3,4]{\fnm{Ali} \sur{Eshragh}}

\affil*[1]{\orgdiv{School of Information and Physical Sciences}, \orgname{University~of~Newcastle}, \orgaddress{\state{NSW}, \country{Australia}}}
\affil[2]{\orgdiv{Department of Industrial and Management Systems Engineering}, \orgname{University of South Florida}, \orgaddress{\state{FL}, \country{USA}}}
\affil[3]{\orgdiv{Carey Business School}, \orgname{Johns Hopkins University}, \orgaddress{\state{MD}, \country{USA}}}
\affil[4]{\orgdiv{International Computer Science Institute}, \orgname{University~of~California~at~Berkeley}, \orgaddress{\state{CA}, \country{USA}}}

\abstract{
The order picking problem seeks the shortest warehouse route that visits
all required item locations. Strict conditions are known
for single-block rectangular layouts under which optimal routes 
never require double traversals,
while broader results show that double traversals serving
cross-aisle connectivity can always be avoided.
We strengthen these findings by proving that no double traversals
are needed in the boundary subaisles,
the uppermost and lowermost subaisle segments,
of warehouses with at least two non-empty aisles.
This yields a unified strict condition for all single-block layouts
and for two-block layouts with more than one aisle.
For these widely used layouts,
exact methods such as dynamic programming and mathematical programming
can therefore exclude the double-traversal configuration
from every boundary subaisle,
reducing the number of admissible edge configurations
without loss of optimality.
}

\keywords{Warehousing, Picker routing, Tour subgraphs, Dynamic programming}

\pacs[MSC Classification]{90B06, 90C39, 90C11}

\maketitle

\section{Introduction}
\label{introduction}

The order picking problem asks for the shortest route through a 
warehouse that visits all required item locations.
Dynamic programming algorithms for rectangular layouts
have been developed for two cross-aisles
\cite{ratliff1983order} and generalized to larger warehouses
\cite{roodbergen2001routing, pansart2018exact},
based on a restricted set of subaisle edge configurations.
\citet{revenant2025note} proved that in single-block warehouses, 
minimal routes can always be found without double traversals, 
establishing a strict condition under which double edges
are unnecessary.

Building on this,
our recent work \cite{dunn2025double} generalized this analysis to 
rectangular warehouses of arbitrary size.
By proposing a transformation framework for the underlying graph representation,
we showed that while double edges may still occur in minimal routes,
they are never required for cross-aisle connectivity, and that
any minimal route can be represented with only single edges
connecting components.
However, that result left open the question of when
non-connecting double edges are structurally unavoidable,
particularly in multi-block layouts.

In this paper, we close this remaining gap for the most
practically relevant layouts.
We prove that no double edges of any kind are required
in the boundary subaisles of a rectangular warehouse
with at least two non-empty aisles.
Since single-block and two-block layouts consist only of such
boundary subaisles, this yields a unified strict condition under
which optimal routes never require double traversals in
\emph{any} single-block layout and in all two-block layouts
with more than one aisle.
This establishes an entirely new strict result for two-block
warehouses and consolidates the earlier single-block condition
within a broader structural framework.

Beyond their structural interest, these results have direct
algorithmic implications.
Dynamic programming algorithms for rectangular warehouses
\cite{ratliff1983order, roodbergen2001routing, pansart2018exact}
and edge-based mathematical programming formulations
\cite{goeke2021modeling}
enumerate six vertical edge configurations at each subaisle,
causing the total number of candidate configurations to grow
combinatorially with warehouse size.
By eliminating the double-traversal configuration
from boundary subaisles,
our results reduce this combinatorial space
warehouse-wide for single-block and two-block layouts,
and at every boundary subaisle for larger multi-block layouts.

\section{Prerequisites}
\label{prerequisites}

This paper extends the framework introduced in \citet{dunn2025double}.
As we adopt the same warehouse terminology and graph representations,
only the key concepts needed for the present results are summarized below.
The reader is referred to \citet{dunn2025double}
for detailed definitions and figures.

A rectangular warehouse consists of a sequence of vertical aisles
separated by horizontal cross-aisles,
forming a grid containing vertices at each intersection.
A subaisle refers to the aisle segment between
two consecutive cross-aisles
and is where items are stored.
The picker routing problem seeks a minimum-length tour
of the graph that visits all locations
on a specified pick list.
The depot is the only pick location
that may coincide with an intersection vertex.

\citet{ratliff1983order} showed that for a minimal tour,
only six possible vertical edge configurations
are valid in each subaisle,
including single and double traversals.
These six configurations form the building blocks
for dynamic programming algorithms
\cite{ratliff1983order, roodbergen2001routing, pansart2018exact}
and edge-based mathematical programming formulations
\cite{goeke2021modeling}
in rectangular warehouses.
\citet{dunn2025double} defined a single edge
$(a_{i}, b_{i})^1$
as a single traversal spanning one or more subaisles
without intermediate horizontal edges.
A double edge $(a_{i}, b_{i})^2$
is defined analogously as a pair of parallel edges
spanning the same subaisles.
A double edge with horizontal edges incident
to both end vertices is called ``connecting''
and was shown to be unnecessary.

\begin{thm}[\citet{dunn2025double}]
\label{thm:connecting} 
There exists a minimum length tour subgraph
$T \subseteq G$ that does not contain a
connecting double edge.
\end{thm}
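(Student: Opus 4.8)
The plan is to establish the statement by a minimal-counterexample exchange argument. Among all minimum-length tour subgraphs of $G$, choose $T$ to be one containing the fewest connecting double edges; the aim is to show this number is zero. Suppose instead that $T$ contains a connecting double edge $e = (a_i,b_i)^2$. By the connecting hypothesis, each endpoint is incident to at least one horizontal edge, and since the two parallel copies of $e$ contribute degree two at each of $a_i$ and $b_i$, the even-degree requirement forces the remaining edges at each endpoint to have even total degree as well. The strategy is to delete both copies of $e$ and then, if necessary, restore connectivity using the incident horizontal edges, producing a tour subgraph of no greater length but with strictly fewer connecting double edges.

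First I would delete the two parallel copies of $e$. This decreases the total length by twice the span of the run and leaves every vertex with even degree, since $a_i$ and $b_i$ each lose exactly two. If the resulting subgraph $T' = T - e$ is still connected, then $T'$ is a tour subgraph of strictly smaller length, contradicting the optimality of $T$; in particular this settles every case in which some other vertical run of $T$ already links the two cross-aisles bounding the subaisle. It therefore remains only to treat the situation in which deleting $e$ separates $T$ into two components $C_1 \ni a_i$ and $C_2 \ni b_i$, so that the two copies of $e$ formed the unique cut between them.

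The heart of the argument is this reconnection step. Here I would exploit the horizontal edges guaranteed by the connecting hypothesis: tracing them along the two cross-aisles bounding the run, one identifies a column at which the top-cross-aisle portion of $C_1$ and the bottom-cross-aisle portion of $C_2$ can be rejoined by a single traversal, rebalancing parity by toggling horizontal edges already present in $T$. The delicate accounting is to verify that this reroute never increases the total length and never introduces a new double edge elsewhere. I expect this to be the main obstacle, since the saving of one copy of $e$ must be shown to dominate the cost of the reroute; carrying this out requires a case analysis on whether the horizontal edges at $a_i$ and $b_i$ lie on the same side or on opposite sides of the run, checked against the six admissible subaisle configurations of \citet{ratliff1983order}. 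Once the reroute is shown to be length non-increasing and to preserve both the even-degree and connectivity conditions, the modified graph is a minimum-length tour subgraph with one fewer connecting double edge, contradicting the choice of $T$ and completing the argument.
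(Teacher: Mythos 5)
There is a genuine gap, and it appears at the very first move. You propose to delete \emph{both} copies of the connecting double edge $e=(a_i,b_i)^2$ and argue that if $T-e$ remains connected you are done. But a double edge in this framework is a double traversal of one or more subaisles, and those subaisles will in general contain required pick locations; removing both copies leaves those pick vertices with degree zero, so $T-e$ is not a tour subgraph at all, regardless of connectivity or parity. Any correct argument must \emph{retain} coverage of the run, which is why the result cited here (from \citet{dunn2025double}) does not delete the double edge but replaces it via the transformation $T\rightarrow T'$ of Figure~\ref{fig:transformations}: one vertical edge is shifted from the current aisle to the previous aisle and one horizontal edge is moved from bottom to top, so a single traversal of the run survives and every pick in it is still visited. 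Your deletion-plus-reconnection scheme only makes sense for a double edge spanning empty subaisles, which is a small special case of the theorem.

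The second difficulty is that the step you yourself identify as ``the heart of the argument'' --- reconnecting $C_1$ and $C_2$ by a traversal in some other column while ``rebalancing parity by toggling horizontal edges'' --- is exactly where all of the content of the theorem lives, and it is left unexecuted. Toggling horizontal edges changes degrees at every intermediate cross-aisle vertex, can lengthen the tour, and can create new double edges or destroy one of the six admissible Ratliff--Rosenthal configurations in the columns you pass through; nothing in the sketch controls this. The cited proof sidesteps the open-ended search for a reconnection column by classifying connecting double edges into the five states $\mathcal{S}=\{(0,1),(0,2),(1,1),(1,2),(2,2)\}$ according to the horizontal edges incident to the left of each endpoint, and then verifying a single uniform local exchange state by state (Lemma~\ref{lem:connecting} records the four states that always reduce, and Lemma~\ref{lem:single} isolates the one obstruction). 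To repair your argument you would need, at minimum, to replace deletion by substitution of a coverage-preserving configuration and then carry out the length and parity accounting for each admissible local neighbourhood --- at which point you have essentially reconstructed the state-based case analysis.
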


An aisle is said to be non-empty if it contains
at least one pick location.
When a layout with two or more blocks
has exactly one non-empty aisle,
a double edge is trivially required to connect the depot
to the farthest item.
Hence, we focus on layouts
with two or more non-empty aisles.

In \citet{dunn2025double} a state representation
for double edges was provided as
$s = (s_a, s_b)$,
defined by the number of horizontal
edges incident to the left of each end vertex.
Due to the horizontal and vertical symmetry of the graph,
there are only five distinct states
$\mathcal{S} = \{ (0,1), (0,2), (1,1), (1,2), (2,2) \}$.
This representation applies to non-connecting double edges
with horizontal connections on one end,
for which only states $(0,1)$ and $(0,2)$ are valid.
The state $(0, 0)$ is not required
because we consider warehouses with
two or more non-empty aisles and
one end of the non-connecting
double edge must have an incident horizontal edge
to ensure connectivity to the rest of the graph.
These are illustrated in Fig.~\ref{fig:transformations}
along with the corresponding transformation
$T \rightarrow T'$,
which maps one tour subgraph to another by
shifting one vertical edge
from the current aisle to the previous aisle
and one horizontal edge from the bottom to the top
\cite{dunn2025double}.
Intuitively, this operation ``slides'' edges across
adjacent aisles and cross-aisles while preserving feasibility
and tour length.

\begin{figure}[ht]
\begin{center}
\centering
    \begin{subfigure}[b]{0.47\textwidth}
        \centering
        \resizebox{\linewidth}{!}{
            \begin{tikzpicture}[shorten >=1pt,draw=black!50]
                \pgfmathsetmacro{\x}{2}
                \pgfmathsetmacro{\y}{4}
                \pgfmathsetmacro{\b}{0.5}
                \pgfmathsetmacro{\nodesize}{30}
                \node[circle, draw=black, minimum size=\nodesize pt, fill=white] (a_j-1) at (0, \y) { };
                \node[align=center] at (a_j-1) {$a_{i-1}$};
                \node[circle, draw=black, minimum size=\nodesize pt, fill=white] (a_j) at (\x, \y) { };
                \node[align=center] at (a_j) {$a_i$};
                \node[circle, draw=black, minimum size=\nodesize pt, fill=white] (b_j) at (\x, 0) { };
                \node[align=center] at (b_j) {$b_i$};
                \node[circle, draw=black, minimum size=\nodesize pt, fill=white] (b_j-1) at (0, 0) { };
                \node[align=center] at (b_j-1) {$b_{i-1}$};
                \draw[-, draw=black, thick, double, double distance between line centers=8pt] (a_j) -- (b_j);
                \draw[-, draw=black, thick] (b_j-1) -- (b_j);
                \begin{pgfonlayer}{background}
                    \fill[gray!20]
                    ($(a_j-1) + (\b, 0)$) -- ($(b_j-1) + (\b, 0)$) --
                    ($(b_j-1) + (-\b, 0)$) -- ($(a_j-1) + (-\b, 0)$) -- cycle;
                \end{pgfonlayer}
                \draw[->, draw=black, thick] (1.4*\x, 0.5*\y) -- (1.6*\x, 0.5*\y);
                \node[circle, draw=black, minimum size=\nodesize pt, fill=white] (a2_j-1) at (2*\x, \y) { };
                \node[align=center] at (a2_j-1) {$a_{i-1}$};
                \node[circle, draw=black, minimum size=\nodesize pt, fill=white] (a2_j) at (3*\x, \y) { };
                \node[align=center] at (a2_j) {$a_i$};
                \node[circle, draw=black, minimum size=\nodesize pt, fill=white] (b2_j) at (3*\x, 0) { };
                \node[align=center] at (b2_j) {$b_i$};
                \node[circle, draw=black, minimum size=\nodesize pt, fill=white] (b2_j-1) at (2*\x, 0) { };
                \node[align=center] at (b2_j-1) {$b_{i-1}$};
                \draw[-, draw=black, thick] (a2_j) -- (b2_j);
                \draw[-, draw=red, thick] (a2_j-1) -- (b2_j-1);
                \draw[-, draw=black, thick] (a2_j-1) -- (a2_j);
                \begin{pgfonlayer}{background}
                    \fill[gray!20]
                    ($(a2_j-1) + (\b, 0)$) -- ($(b2_j-1) + (\b, 0)$) --
                    ($(b2_j-1) + (-\b, 0)$) -- ($(a2_j-1) + (-\b, 0)$) -- cycle;
                \end{pgfonlayer}
            \end{tikzpicture}
        }
        \caption{$s=(0, 1)$}
        \label{fig:transformation_01}
    \end{subfigure}
    \hfill
    \begin{subfigure}[b]{0.47\textwidth}
        \centering
        \resizebox{\linewidth}{!}{
            \begin{tikzpicture}[shorten >=1pt,draw=black!50]
                \pgfmathsetmacro{\x}{2}
                \pgfmathsetmacro{\y}{4}
                \pgfmathsetmacro{\b}{0.5}
                \pgfmathsetmacro{\nodesize}{30}
                \node[circle, draw=black, minimum size=\nodesize pt, fill=white] (a_j-1) at (0, \y) { };
                \node[align=center] at (a_j-1) {$a_{i-1}$};
                \node[circle, draw=black, minimum size=\nodesize pt, fill=white] (a_j) at (\x, \y) { };
                \node[align=center] at (a_j) {$a_i$};
                \node[circle, draw=black, minimum size=\nodesize pt, fill=white] (b_j) at (\x, 0) { };
                \node[align=center] at (b_j) {$b_i$};
                \node[circle, draw=black, minimum size=\nodesize pt, fill=white] (b_j-1) at (0, 0) { };
                \node[align=center] at (b_j-1) {$b_{i-1}$};
                \draw[-, draw=black, thick, double, double distance between line centers=8pt] (a_j) -- (b_j);
                \draw[-, draw=black, thick, double, double distance between line centers=8pt] (b_j-1) -- (b_j);
                \begin{pgfonlayer}{background}
                    \fill[gray!20]
                    ($(a_j-1) + (\b, 0)$) -- ($(b_j-1) + (\b, 0)$) --
                    ($(b_j-1) + (-\b, 0)$) -- ($(a_j-1) + (-\b, 0)$) -- cycle;
                \end{pgfonlayer}
                \draw[->, draw=black, thick] (1.4*\x, 0.5*\y) -- (1.6*\x, 0.5*\y);
                \node[circle, draw=black, minimum size=\nodesize pt, fill=white] (a2_j-1) at (2*\x, \y) { };
                \node[align=center] at (a2_j-1) {$a_{i-1}$};
                \node[circle, draw=black, minimum size=\nodesize pt, fill=white] (a2_j) at (3*\x, \y) { };
                \node[align=center] at (a2_j) {$a_i$};
                \node[circle, draw=black, minimum size=\nodesize pt, fill=white] (b2_j) at (3*\x, 0) { };
                \node[align=center] at (b2_j) {$b_i$};
                \node[circle, draw=black, minimum size=\nodesize pt, fill=white] (b2_j-1) at (2*\x, 0) { };
                \node[align=center] at (b2_j-1) {$b_{i-1}$};
                \draw[-, draw=black, thick] (a2_j) -- (b2_j);
                \draw[-, draw=red, thick] (a2_j-1) -- (b2_j-1);
                \draw[-, draw=black, thick] (a2_j-1) -- (a2_j);
                \draw[-, draw=black, thick] (b2_j-1) -- (b2_j);
                \begin{pgfonlayer}{background}
                    \fill[gray!20]
                    ($(a2_j-1) + (\b, 0)$) -- ($(b2_j-1) + (\b, 0)$) --
                    ($(b2_j-1) + (-\b, 0)$) -- ($(a2_j-1) + (-\b, 0)$) -- cycle;
                \end{pgfonlayer}
            \end{tikzpicture}
        }
        \caption{$s=(0, 2)$}
        \label{fig:transformation_02}
    \end{subfigure}
\caption{Transformation $T \rightarrow T'$
for each non-connecting double edge state.
Gray shading indicates arbitrary feasible configurations,
while the red line denotes the addition of a single edge
to these configurations.}
\label{fig:transformations}
\end{center}
\end{figure}

Within the proof of Theorem~\ref{thm:connecting},
\citet{dunn2025double} showed that double edges in states
$(0,2)$, $(1,1)$, $(1,2)$ and $(2,2)$
can always be reduced using this transformation.
We record this here as a supporting lemma.

\begin{lem}
\label{lem:connecting}
Double edges belonging to
states $(0,2)$, $(1,1)$, $(1,2)$ and $(2,2)$
are unnecessary.
\end{lem}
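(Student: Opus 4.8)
The plan is to prove the claim constructively, by showing that the transformation $T \rightarrow T'$ of Figure~\ref{fig:transformations} removes a double edge in each of the four states while leaving the tour length unchanged and the tour subgraph valid; iterating the transformation then yields a minimum-length tour subgraph containing no such double edge, which is exactly the assertion that these edges are unnecessary. I would apply the transformation as drawn for state $(0,2)$, and, invoking the vertical symmetry of the grid noted in Section~\ref{prerequisites}, its vertical reflection for the states whose upper cross-aisle carries more horizontal edges than the lower one.

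First I would settle the length and parity bookkeeping, which is routine. The transformation deletes one vertical copy of the double edge in aisle $i$ and inserts a single vertical edge across the same subaisle in aisle $i-1$, while simultaneously moving one horizontal edge from the lower cross-aisle ($b_{i-1}b_i$) to the upper cross-aisle ($a_{i-1}a_i$); the two vertical edges have equal length and the two horizontal edges span the same pair of aisles, so $|T'| = |T|$. Tracking the four affected vertices, the net degree changes are $0$ at $a_i$, $-2$ at $b_i$, $+2$ at $a_{i-1}$ and $0$ at $b_{i-1}$, all even, so the degree-parity condition characterising a tour subgraph \cite{ratliff1983order} is preserved, and since no subaisle loses its last traversal, every pick location remains covered.

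The hard part will be connectivity: I must verify that deleting the lower horizontal edge cannot disconnect $T'$. The key observation is that the deleted edge $b_{i-1}b_i$ is replaced by the path $b_{i-1}\,a_{i-1}\,a_i$ formed from the inserted vertical edge in aisle $i-1$ together with the inserted upper horizontal edge, so any two vertices joined through $b_{i-1}b_i$ in $T$ remain joined in $T'$; moreover, in states $(0,2)$, $(1,2)$ and $(2,2)$ a second copy of that horizontal edge survives in any case. For the three connecting states $(1,1)$, $(1,2)$ and $(2,2)$ this connectivity check is already subsumed by Theorem~\ref{thm:connecting}, since a double edge carrying horizontal edges at both ends is connecting; the genuinely new contribution is the non-connecting state $(0,2)$, for which Figure~\ref{fig:transformation_02} supplies the required reduction directly.

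Finally I would argue termination, to ensure the procedure eliminates these double edges rather than merely relocating them. Because each of these states has a horizontal edge incident on the left of at least one end vertex, the aisle index satisfies $i \ge 2$ and the leftward shift into aisle $i-1$ is always available; this shift strictly decreases the monovariant $\Phi = \sum_{e} \mathrm{col}(e)$ summed over the vertical edges of $T$, where $\mathrm{col}(e)$ denotes the aisle index of $e$. Since $\Phi$ is a positive integer, the process halts after finitely many steps at a minimum-length tour subgraph in which no double edge of states $(0,2)$, $(1,1)$, $(1,2)$ or $(2,2)$ remains.
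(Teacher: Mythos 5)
Your overall strategy matches the paper's: Lemma~\ref{lem:connecting} is not proved from scratch here but recorded as a byproduct of the transformation $T\rightarrow T'$ used in the proof of Theorem~\ref{thm:connecting} in \cite{dunn2025double}, and you correctly observe that the states $(1,1)$, $(1,2)$ and $(2,2)$ (as well as any connecting instance of $(0,2)$) are already disposed of by Theorem~\ref{thm:connecting}, leaving the non-connecting $(0,2)$ case of Figure~\ref{fig:transformation_02} as the only content requiring the explicit length/parity/connectivity/termination bookkeeping you supply. That bookkeeping is essentially right: the degree changes $(0,-2,+2,0)$, the equal-length exchange, the replacement path $b_{i-1}\,a_{i-1}\,a_i\,b_i$, and the monovariant $\Phi$ that handles the fact that the shifted vertical edge may recreate a double edge in aisle $i-1$ (the situation isolated in Lemma~\ref{lem:single}).

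The one step you leave unverified is the multiplicity bound on inserted edges. The red edge $(a_{i-1},b_{i-1})^1$ is \emph{added} to an arbitrary feasible configuration in the gray region; if that region already contains a double traversal of the same subaisle span, your $T'$ carries a triple vertical edge, which is not a valid configuration in the Ratliff--Rosenthal sense, and the same concern applies to the inserted top horizontal edge when $s_a=2$ (moot for you, since you defer that state to Theorem~\ref{thm:connecting}, but it is why one cannot literally reuse the $(0,2)$ picture for all four states as your opening paragraph suggests). The gap is closable --- if aisle $i-1$ already holds a double edge there, removing one copy from each aisle and one bottom horizontal edge while adding one top horizontal edge yields a strictly shorter feasible subgraph, contradicting minimality, so the offending configuration cannot occur in a minimum-length tour --- but some such sentence is needed both for the validity of a single application of the transformation and for your termination argument, which assumes the transformation remains applicable at every intermediate step.
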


A second condition from \citet{dunn2025double} identifies
when the transformation fails to reduce a double edge.

\begin{lem}
\label{lem:single}
Under transformation $T \rightarrow T'$,
the double edge $(a_{i-1}, b_{i-1})^2 \in T$ is
always reduced unless the preceding subaisles
contain a single edge $(a_{i-1}, b_{i-1})^1 \in T$.
\end{lem}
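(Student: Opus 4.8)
The plan is to prove the statement by working directly from the local edge surgery that defines $T \rightarrow T'$, using the count of double edges as the quantity being reduced. First I would dispose of the easy states: by Lemma~\ref{lem:connecting} the double edges in states $(0,2)$, $(1,1)$, $(1,2)$ and $(2,2)$ are reduced unconditionally, so the only binding case is the non-connecting state $(0,1)$ of Figure~\ref{fig:transformation_01}. For that case I would recall the four modifications comprising the transformation: one of the two parallel tracks of the double edge is deleted (turning it into a single traversal), the incident bottom horizontal edge is deleted, a single vertical edge is inserted in the preceding aisle, and a top horizontal edge is inserted. I would first check that these four changes alter every vertex degree by an even amount and leave the total length unchanged, so that $T'$ is again a tour subgraph of minimum length; connectivity of $T'$ can be inherited from the argument establishing Theorem~\ref{thm:connecting}, since the surgery here is the same one used there.

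With validity in hand, the core of the argument is a case analysis on what the preceding subaisle already contained in $T$, focussing on the single vertical edge that the transformation inserts there. Since only the six admissible configurations of \citet{ratliff1983order} may occur, that subaisle carries either no vertical edge, a single edge, or a double edge. If it carried no vertical edge, the inserted track produces a single edge and the original double edge $(a_{i-1}, b_{i-1})^2$ has been converted to a single edge with no new double edge created anywhere, so it is reduced. A double edge in the preceding subaisle is inadmissible, since a third parallel track is not a valid configuration, which leaves only the case where the preceding subaisle already held a single edge $(a_{i-1}, b_{i-1})^1$; then the inserted track merges with it to form a double edge, so the double edge has merely migrated one aisle rather than being eliminated. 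Assembling the three cases yields exactly the claimed dichotomy: the transformation reduces the double edge in every case except when the preceding subaisles contain a single edge $(a_{i-1}, b_{i-1})^1$.

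The hard part will not be the degree or length bookkeeping, which is routine, but confirming that ``reduced'' is being measured correctly once edges may span several stacked subaisles without intermediate horizontal edges. I must verify that the inserted vertical edge aligns with the full vertical span of any pre-existing single edge in the preceding aisle, so that the two genuinely combine into one admissible double edge rather than producing an illegal partial overlap, and conversely that when no such aligned single edge is present no new double edge can arise. Establishing this span-matching against each admissible configuration on the preceding aisle, and thereby showing that a preceding single edge is the \emph{only} obstruction to reduction, is where the real care is required.
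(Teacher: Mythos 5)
The paper itself supplies no proof of this lemma: it is recorded as a fact extracted from the proof of Theorem~\ref{thm:connecting} in \cite{dunn2025double}, so your argument can only be judged on its own terms. Its overall shape --- check that the surgery of Figure~\ref{fig:transformation_01} preserves length and degree parity, then case-split on the vertical content of the preceding aisle --- is the natural one and is consistent with how the surrounding text uses the lemma. But two steps do not hold up. First, your disposal of the ``preceding subaisle carries a double edge'' case is logically inverted: you declare the case inadmissible because inserting a third parallel track would violate the six-configuration restriction, but that restriction constrains $T$, not the hypothetical image of $T$ under the surgery. A double edge in the preceding subaisle is a perfectly legal feature of $T$ and must be excluded (or absorbed) by a positive argument --- for instance, that the resulting triple edge in $T'$ could be replaced by a single edge, preserving degree parity and the connectivity of its endpoints while making the subgraph strictly shorter than $T$, contradicting minimality; hence the case cannot arise in a minimum tour subgraph, or the double is reduced a fortiori.

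Second, and more seriously, you explicitly leave your own ``hard part'' undone. Since $(a_i,b_i)^1$ and $(a_i,b_i)^2$ may span several stacked subaisles, the inserted vertical edge in aisle $i-1$ covers a whole range of subaisles, and the preceding aisle may carry vertical edges over only part of that range (say, a single traversal of one subaisle in the middle). The insertion then creates a new double traversal over the overlap while leaving singles elsewhere, so the number of double edges need not drop even though no full-span single edge $(a_{i-1},b_{i-1})^1$ was present --- precisely a configuration your dichotomy would misclassify as ``reduced.'' You flag this as ``where the real care is required'' and then stop; a proof cannot end by naming its missing step. To close the argument you must either show that such partial overlaps cannot occur in a minimum tour subgraph (again by a length/parity exchange), or verify that the exception clause, read against each admissible configuration of the preceding aisle, genuinely captures every obstruction to reduction.
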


Together, these results imply that,
in any warehouse with two or more non-empty aisles,
the only double edges that might still be required
are non-connecting edges in state $(0,1)$
whose preceding aisle segment contains a single edge.
The present paper shows that even these
remaining double edges are unnecessary
whenever they lie in a boundary subaisle.

\section{Main Result}
\label{proof_v3}

We extend the framework of \citet{dunn2025double}
to eliminate all double edges in the boundary
subaisles of any rectangular warehouse
with two or more non-empty aisles.
The \emph{boundary subaisles} are the uppermost and lowermost
subaisles (see Fig.~\ref{fig:structural_boundary}).

\begin{figure}[ht]
\centering
\begin{tikzpicture}[
    scale=1,
    every node/.style={circle, draw=black, minimum size=6mm},
    execute at end picture={\path[use as bounding box] (-3, 0.7) rectangle (9, -8);}]
\clip (-3, 0.7) rectangle (9, -8);

\def\nAisles{5}
\def\nRows{6}

\tikzset{
  dbl/.style={-, draw=black, double,
  double distance=5pt
  }
}

\fill[gray!20] (0.5,-0.5) rectangle (\nAisles+0.5,-2.25);
\fill[gray!20] (0.5,-4.75) rectangle (\nAisles+0.5,-6.5);

\foreach \i in {1,...,\nAisles} {
    \foreach \j in {1,...,\nRows} {
        \node[fill=white] (v\i\j) at (\i,-\j) {};
    }
}

\foreach \i in {1,...,\nAisles} {
    \foreach \j in {1,...,\numexpr\nRows-1\relax} {
        \pgfmathtruncatemacro{\nextj}{\j+1}
        \draw[dbl] (v\i\j) -- (v\i\nextj);
    }
}

\foreach \j in {1,...,\nRows} {
    \foreach \i in {1,...,\numexpr\nAisles-1\relax} {
        \pgfmathtruncatemacro{\nexti}{\i+1}
        \draw[dbl] (v\i\j) -- (v\nexti\j);
    }
}

\node[draw=none, align=center] (label) at (-2,-3.25) {boundary\\subaisles};
\draw[->, thick] (label.east) -- (0, -2);
\draw[->, thick] (label.east) -- (0, -5);

\node[draw=none] (front) at (\nAisles/2+0.5, -7.5) {front of warehouse};
\node[draw=none] (back) at (\nAisles/2+0.5, 0.5) {back of warehouse};

\draw[dashed] (0.5, 0) -- (\nAisles+0.5, 0);
\draw[dashed] (0.5, -7) -- (\nAisles+0.5, -7);

\end{tikzpicture}
\caption{Boundary subaisles (shaded) in a rectangular warehouse graph.
Our main result shows that no double edge in these subaisles is required
for a minimal tour when there are two or more non-empty aisles.}
\label{fig:structural_boundary}
\end{figure}

\begin{thm}[Elimination of double edges in boundary subaisles]
\label{thm:main} 
In any rectangular warehouse
with two or more non-empty aisles,
a double edge in the boundary subaisles
is not required for a minimal tour.
\end{thm}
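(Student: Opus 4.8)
The plan is to start from the reduction already assembled in the prerequisites. By Theorem \ref{thm:connecting} and Lemma \ref{lem:connecting} I may discard every connecting double edge and every double edge in states $(0,2)$, $(1,1)$, $(1,2)$, $(2,2)$, so the only obstruction left is a non-connecting double edge in state $(0,1)$; and by Lemma \ref{lem:single} the transformation $T\to T'$ fails to remove such an edge only when the preceding subaisle carries a single edge $(a_{i-1},b_{i-1})^1$. I would therefore fix a minimal tour $T$ containing a non-connecting state-$(0,1)$ double edge in the upper subaisle whose preceding subaisle holds a single edge, and argue that $T$ can be rewritten, at equal length, so that this edge disappears.

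The engine of the proof is repeated application of $T\to T'$, read together with a parity bookkeeping that the outer subaisle makes available. Because the upper subaisle abuts the top cross-aisle, its top vertices $a_j$ carry no vertical edge on the outer side, so the degree of each such vertex is fixed entirely by the vertical edge beneath it and the horizontal edges of the top cross-aisle. I would use this to pin one coordinate of the state of the double edge that $T\to T'$ produces at aisle $i-1$: the parity forced at $a_{i-1}$ guarantees that, after the transformation, the new double edge is either in one of the states already dispatched by Lemma \ref{lem:connecting} (hence removable at once) or is again a non-connecting double edge of the same $(0,1)$ type, now located one aisle further in. Iterating, the offending double edge is driven toward the boundary of the block, and at the outermost non-empty aisle there is no preceding subaisle to supply the single edge that Lemma \ref{lem:single} requires, so the transformation must there eliminate it. Running the mirror-image argument on the bottom cross-aisle settles the lower subaisle, and the theorem follows.

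The step I expect to fight hardest is termination of this iteration once more than one block is present. In a single-block layout both bounding cross-aisles are outer, so both end vertices of the double edge are parity-pinned, the induced chain of reductions advances monotonically toward a boundary aisle, and the argument is essentially the one-block statement of \citet{revenant2025note} recovered inside our framework. With a second block, however, the inner end vertex sits on a shared cross-aisle and may carry vertical edges into the neighbouring block; its parity is no longer fixed, the state produced by $T\to T'$ is only half-determined, and the new double edge can attach on the far side, threatening to reverse the direction of the march and to loop between two adjacent aisles. The real work is then a clean termination certificate: I would introduce a potential---the position of the double edge, refined lexicographically by its connection side and by the number of single edges separating it from the nearer boundary---and show that every admissible step strictly decreases it, or alternatively show that the recalcitrant sub-case, an inner vertex of odd vertical degree into the adjacent block, already lands in a state covered by Lemma \ref{lem:connecting} once that adjacent-block edge is accounted for. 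Establishing that no infinite oscillation can occur is, to my mind, the crux on which the extension from single-block to two-block warehouses turns.
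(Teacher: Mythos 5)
Your opening reduction matches the paper exactly: Theorem~\ref{thm:connecting}, Lemma~\ref{lem:connecting} and Lemma~\ref{lem:single} leave only non-connecting state-$(0,1)$ double edges whose preceding subaisle carries a single edge. After that, however, your proposal diverges into an iterative ``march the double edge toward the boundary'' scheme, and the step you yourself identify as the crux --- a termination certificate ruling out oscillation between adjacent aisles --- is never supplied. You sketch two possible ways it might go (a lexicographic potential, or absorbing the recalcitrant sub-case into Lemma~\ref{lem:connecting}) but prove neither, so as written the argument is a plan rather than a proof. Your worry about the inner end vertex on a shared cross-aisle reversing the direction of the march is legitimate in your framework, which is precisely why the open termination step cannot be waved away.

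The paper avoids iteration altogether, and it does so using two ingredients absent from your proposal. First, it splits the remaining double edges by whether the \emph{outer} vertex (the one on the warehouse border) has an incident horizontal edge: ``inner-connected'' versus ``outer-connected.'' For an inner-connected edge whose outer vertex is not the depot, no transformation is needed at all --- the doubled segment between the outer vertex and the nearest pick location is a dead end and can simply be deleted, strictly shortening the tour. Second, the depot is the exceptional case (it is the only pick location permitted at an intersection vertex), and your parity bookkeeping never accounts for it: if $a_i$ is the depot you cannot trim, and if the outer vertex of the transformed edge is the depot the inner-connected reduction is blocked. The paper closes these cases with a short mutual-reference argument: an outer-connected edge transforms (in at least one of the two available directions, since $b_i$ has horizontal edges on both sides and the depot cannot lie on both) into a connecting or inner-connected edge already handled; an inner-connected edge at the depot transforms into a connecting or outer-connected edge, because the transformation itself installs a horizontal edge at $a_{i-1}$. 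Each case needs at most one application of $T\rightarrow T'$, so no termination argument is required. To repair your proof you would either have to carry out the potential-function argument in full, or adopt the inner/outer dichotomy and the depot case analysis, at which point the iteration becomes unnecessary.
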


\begin{proof}
We begin by making explicit what we mean by a feasible \emph{tour
subgraph} $T\subseteq G$.
Following \citet{ratliff1983order}, feasibility means:
(i) $T$ contains every required pick location, including the depot,
(ii) $T$ is connected,
and (iii) every vertex in $T$ has even degree.
The length of $T$ is the sum of the lengths of all its edges.
The transformation $T\rightarrow T'$ from \citet{dunn2025double}
replaces a local edge pattern by another that retains feasibility
and has the same total edge length, so the resulting tour subgraph
$T'$ is feasible and has the same length as $T$.

From Theorem~\ref{thm:connecting}, connecting double edges are
unnecessary, so it suffices to consider non-connecting double edges.
For these, only states $(0,1)$ and $(0,2)$ can occur, and
Lemma~\ref{lem:connecting} shows that double edges in state $(0,2)$ are not
required.
Lemma~\ref{lem:single} further ensures that applying the transformation
$T\rightarrow T'$ always reduces double edges unless the preceding aisle
segment has a single edge.
Consequently, the only boundary-subaisle double edge that could
remain in a minimum-length tour subgraph is a non-connecting double
edge in state $(0,1)$ whose preceding aisle segment contains a
single edge.
Fig.~\ref{fig:double} depicts this configuration and the
transformation applied to it.

A non-connecting double edge in a boundary subaisle has one
endpoint on the warehouse border (top or bottom), which we refer to
as the \emph{outer endpoint}.
We call such a double edge \emph{inner-connected} if the outer endpoint
has no incident horizontal edges in $T$, and \emph{outer-connected}
otherwise.
By symmetry, the arguments for the upper and lower boundary subaisles
are identical.
Fig.~\ref{fig:double} illustrates the inner-connected case in the upper
subaisle and the outer-connected case in the lower subaisle to match the
order of the state representation.

Assume, for contradiction, that every minimum-length tour subgraph
contains at least one boundary-subaisle double edge, and among
these choose $T$ with the fewest such double edges.
We show that in every possible configuration we can obtain either
(i) a strictly shorter feasible tour subgraph or
(ii) a feasible tour subgraph of the same length with fewer
boundary-subaisle double edges,
contradicting the choice of $T$.
The argument proceeds by three exhaustive cases.

\begin{figure}[ht]
\begin{center}
\centering
    \begin{subfigure}[t]{0.49\textwidth}
        \centering
        \resizebox{\linewidth}{!}{
            \begin{tikzpicture}[shorten >=1pt,draw=black!50]

                \pgfmathsetmacro{\x}{1.5}
                \pgfmathsetmacro{\y}{3}
                \pgfmathsetmacro{\z}{0.5}

                \pgfmathsetmacro{\a}{0.25}

                \pgfmathsetmacro{\b}{0.5}

                \pgfmathsetmacro{\nodesize}{25}

                
                \node[circle, draw=black, minimum size=\nodesize pt, fill=white]
                (a_j) at (\x, \y) { };
                \node[align=center] at (a_j) {$a_i$};

                \node[circle, draw=black, minimum size=\nodesize pt, fill=white]
                (a_j-1) at (0, \y) { };
                \node[align=center] at (a_j-1) {$a_{i-1}$};

                \node[circle, draw=black, minimum size=\nodesize pt, fill=white]
                (a_j+1) at (2*\x, \y) { };
                \node[align=center] at (a_j+1) {$a_{i+1}$};

                \node[circle, draw=black, minimum size=\nodesize pt, fill=white]
                (b_j) at (\x, 0) { };
                \node[align=center] at (b_j) {$b_i$};

                \node[circle, draw=black, minimum size=\nodesize pt, fill=white]
                (b_j-1) at (0, 0) { };
                \node[align=center] at (b_j-1) {$b_{i-1}$};

                \node[circle, draw=black, minimum size=\nodesize pt, fill=white]
                (b_j+1) at (2*\x, 0) { };
                \node[align=center] at (b_j+1) {$b_{i+1}$};

                \node[circle, draw=black, minimum size=\nodesize pt, fill=white]
                (p1) at (\x, 0.5*\y) { };

                \draw[-, draw=black, thick, double, double distance between line centers=8pt]
                (p1) -- (b_j);

                \draw[-, draw=red, thick, double, double distance between line centers=8pt]
                (p1) -- (a_j);

                \draw[-, draw=black, thick]
                (b_j-1) -- (b_j);

                \draw[-, draw=black, thick]
                (b_j-1) -- (a_j-1);

                \begin{pgfonlayer}{background}
                    \fill[gray!20]
                    ($(b_j-1) + (-\b, 0)$) --
                    ($(b_j-1) + (-\b, -2*\b)$) --
                    ($(b_j+1) + (\b, -2*\b)$) --
                    ($(a_j+1) + (\b, 0)$) --
                    ($(a_j+1) + (-\b, 0)$) --
                    ($(b_j+1) + (-\b, \b)$) --
                    ($(b_j) + (0, \b)$) --
                    ($(b_j) + (0, 0)$) --
                    ($(b_j) + (-\b, 0)$) --
                    ($(b_j) + (-\b, -\b)$) --
                    ($(b_j-1) + (\b, -\b)$) --
                    ($(b_j-1) + (\b, 0)$) --
                    cycle;

                \end{pgfonlayer}

                \draw[-, dotted, draw=black, thick]
                ($(a_j-1) + (-\b, 2*\b)$) -- ($(a_j+1) + (\b, 2*\b)$);


                \draw[->, draw=black, thick] (2.5*\x + 0.2*\z, 0.5*\y)
                -- (2.5*\x + 0.8*\z, 0.5*\y);

                
                \node[circle, draw=black, minimum size=\nodesize pt, fill=white]
                (a2_j) at (4*\x + \z, \y) { };
                \node[align=center] at (a2_j) {$a_i$};

                \node[circle, draw=black, minimum size=\nodesize pt, fill=white]
                (a2_j-1) at (3*\x + \z, \y) { };
                \node[align=center] at (a2_j-1) {$a_{i-1}$};

                \node[circle, draw=black, minimum size=\nodesize pt, fill=white]
                (a2_j+1) at (5*\x + \z, \y) { };
                \node[align=center] at (a2_j+1) {$a_{i+1}$};

                \node[circle, draw=black, minimum size=\nodesize pt, fill=white]
                (b2_j) at (4*\x + \z, 0) { };
                \node[align=center] at (b2_j) {$b_i$};

                \node[circle, draw=black, minimum size=\nodesize pt, fill=white]
                (b2_j-1) at (3*\x + \z, 0) { };
                \node[align=center] at (b2_j-1) {$b_{i-1}$};

                \node[circle, draw=black, minimum size=\nodesize pt, fill=white]
                (b2_j+1) at (5*\x + \z, 0) { };
                \node[align=center] at (b2_j+1) {$b_{i+1}$};

                \node[circle, draw=black, minimum size=\nodesize pt, fill=white]
                (p2) at (4*\x + \z, 0.5*\y) { };

                \draw[-, draw=black, thick]
                (a2_j) -- (p2) -- (b2_j);

                \draw[-, draw=black, thick]
                (a2_j-1) -- (a2_j);

                \draw[-, draw=black, thick, double, double distance between line centers=8pt]
                (b2_j-1) -- (a2_j-1);

                \begin{pgfonlayer}{background}
                    \fill[gray!20]
                    ($(b2_j-1) + (-\b, 0)$) --
                    ($(b2_j-1) + (-\b, -2*\b)$) --
                    ($(b2_j+1) + (\b, -2*\b)$) --
                    ($(a2_j+1) + (\b, 0)$) --
                    ($(a2_j+1) + (-\b, 0)$) --
                    ($(b2_j+1) + (-\b, \b)$) --
                    ($(b2_j) + (0, \b)$) --
                    ($(b2_j) + (0, 0)$) --
                    ($(b2_j) + (-\b, 0)$) --
                    ($(b2_j) + (-\b, -\b)$) --
                    ($(b2_j-1) + (\b, -\b)$) --
                    ($(b2_j-1) + (\b, 0)$) --
                    cycle;

                \end{pgfonlayer}

                \draw[-, dotted, draw=black, thick]
                ($(a2_j-1) + (-\b, 2*\b)$) -- ($(a2_j+1) + (\b, 2*\b)$);


            \end{tikzpicture}
        }
        \caption{Inner-connected}
        \label{fig:double_inner}
    \end{subfigure}
    \begin{subfigure}[t]{0.49\textwidth}
        \centering
        \resizebox{\linewidth}{!}{
            \begin{tikzpicture}[shorten >=1pt,draw=black!50]

                \pgfmathsetmacro{\x}{1.5}
                \pgfmathsetmacro{\y}{3}
                \pgfmathsetmacro{\z}{0.5}

                \pgfmathsetmacro{\a}{0.25}

                \pgfmathsetmacro{\b}{0.5}

                \pgfmathsetmacro{\nodesize}{25}

                
                \node[circle, draw=black, minimum size=\nodesize pt, fill=white]
                (a_j) at (\x, \y) { };
                \node[align=center] at (a_j) {$a_i$};

                \node[circle, draw=black, minimum size=\nodesize pt, fill=white]
                (a_j-1) at (0, \y) { };
                \node[align=center] at (a_j-1) {$a_{i-1}$};

                \node[circle, draw=black, minimum size=\nodesize pt, fill=white]
                (a_j+1) at (2*\x, \y) { };
                \node[align=center] at (a_j+1) {$a_{i+1}$};

                \node[circle, draw=black, minimum size=\nodesize pt, fill=white]
                (b_j) at (\x, 0) { };
                \node[align=center] at (b_j) {$b_i$};

                \node[circle, draw=black, minimum size=\nodesize pt, fill=white]
                (b_j-1) at (0, 0) { };
                \node[align=center] at (b_j-1) {$b_{i-1}$};

                \node[circle, draw=black, minimum size=\nodesize pt, fill=white]
                (b_j+1) at (2*\x, 0) { };
                \node[align=center] at (b_j+1) {$b_{i+1}$};

                \draw[-, draw=black, thick, double, double distance between line centers=8pt]
                (a_j) -- (b_j);

                \draw[-, draw=black, thick]
                (b_j-1) -- (b_j) -- (b_j+1);

                \draw[-, draw=black, thick]
                (b_j-1) -- (a_j-1);

                \begin{pgfonlayer}{background}
                    \fill[gray!20]
                    ($(a_j-1) + (-\b, 0)$) --
                    ($(a_j-1) + (-\b, 2*\b)$) --
                    ($(a_j+1) + (\b, 2*\b)$) --
                    ($(b_j+1) + (\b, 0)$) --
                    ($(b_j+1) + (-\b, 0)$) --
                    ($(a_j+1) + (-\b, \b)$) --
                    ($(a_j) + (\b, \b)$) --
                    ($(a_j) + (\b, 0)$) --
                    ($(a_j) + (-\b, 0)$) --
                    ($(a_j) + (-\b, \b)$) --
                    ($(a_j-1) + (\b, \b)$) --
                    ($(a_j-1) + (\b, 0)$) --
                    cycle;

                \end{pgfonlayer}

                \draw[-, dotted, draw=black, thick]
                ($(b_j-1) + (-\b, -2*\b)$) -- ($(b_j+1) + (\b, -2*\b)$);


                \draw[->, draw=black, thick] (2.5*\x + 0.2*\z, 0.5*\y)
                -- (2.5*\x + 0.8*\z, 0.5*\y);

                
                \node[circle, draw=black, minimum size=\nodesize pt, fill=white]
                (a2_j) at (4*\x + \z, \y) { };
                \node[align=center] at (a2_j) {$a_i$};

                \node[circle, draw=black, minimum size=\nodesize pt, fill=white]
                (a2_j-1) at (3*\x + \z, \y) { };
                \node[align=center] at (a2_j-1) {$a_{i-1}$};

                \node[circle, draw=black, minimum size=\nodesize pt, fill=white]
                (a2_j+1) at (5*\x + \z, \y) { };
                \node[align=center] at (a2_j+1) {$a_{i+1}$};

                \node[circle, draw=black, minimum size=\nodesize pt, fill=white]
                (b2_j) at (4*\x + \z, 0) { };
                \node[align=center] at (b2_j) {$b_i$};

                \node[circle, draw=black, minimum size=\nodesize pt, fill=white]
                (b2_j-1) at (3*\x + \z, 0) { };
                \node[align=center] at (b2_j-1) {$b_{i-1}$};

                \node[circle, draw=black, minimum size=\nodesize pt, fill=white]
                (b2_j+1) at (5*\x + \z, 0) { };
                \node[align=center] at (b2_j+1) {$b_{i+1}$};

                \draw[-, draw=black, thick]
                (a2_j) -- (b2_j) -- (b2_j+1);

                \draw[-, draw=black, thick]
                (a2_j-1) -- (a2_j);

                \draw[-, draw=black, thick, double, double distance between line centers=8pt]
                (b2_j-1) -- (a2_j-1);

                \begin{pgfonlayer}{background}
                    \fill[gray!20]
                    ($(a2_j-1) + (-\b, 0)$) --
                    ($(a2_j-1) + (-\b, 2*\b)$) --
                    ($(a2_j+1) + (\b, 2*\b)$) --
                    ($(b2_j+1) + (\b, 0)$) --
                    ($(b2_j+1) + (-\b, 0)$) --
                    ($(a2_j+1) + (-\b, \b)$) --
                    ($(a2_j) + (\b, \b)$) --
                    ($(a2_j) + (\b, 0)$) --
                    ($(a2_j) + (-\b, 0)$) --
                    ($(a2_j) + (-\b, \b)$) --
                    ($(a2_j-1) + (\b, \b)$) --
                    ($(a2_j-1) + (\b, 0)$) --
                    cycle;

                \end{pgfonlayer}

                \draw[-, dotted, draw=black, thick]
                ($(b2_j-1) + (-\b, -2*\b)$) -- ($(b2_j+1) + (\b, -2*\b)$);


            \end{tikzpicture}
        }
        \caption{Outer-connected}
        \label{fig:double_outer}
    \end{subfigure}
\caption[Transformation for boundary subaisle double edges.]
{
Transformation $T \rightarrow T'$ for a non-connecting double edge
in a boundary subaisle with state $s=(0,1)$
(the only non-connecting state not excluded by Lemma~\ref{lem:connecting})
and a single edge $(a_{i-1},b_{i-1})^1$ in the preceding aisle segment
(the only situation in which Lemma~\ref{lem:single} does not already
reduce the double edge).
Gray regions indicate portions of the tour subgraph that may take any
feasible configuration (subject to even degrees and connectivity), and
the dotted line indicates the warehouse border.
(a) \emph{Inner-connected}: the outer endpoint has no incident horizontal
edges.
The unlabeled vertex is the nearest pick location to $a_i$ on the
boundary subaisle. If $a_i$ is not the depot, the red double segment can
be deleted, yielding a strictly shorter feasible tour subgraph.
(b) \emph{Outer-connected}: the outer endpoint is incident to a single
horizontal edge directed to the right.
This
incidence is forced in this state to maintain even degree at $b_i$.
}
\label{fig:double}
\end{center}
\end{figure}

\paragraph{Case 1: Inner-connected with \texorpdfstring{$a_i \neq$}{ai !=} depot.}
This is the configuration in Fig.~\ref{fig:double_inner}.
Because the boundary double edge is non-connecting and inner-connected,
the outer endpoint $a_i$ has no horizontal edges and
its only incident edges are the two parallel edges
between $a_i$ and the nearest pick location (the unlabeled
vertex).
Consequently, deleting \emph{both} edges in that segment
(highlighted in red)
decreases the degree of each of its endpoints by
$2$ and leaves all other vertex degrees unchanged, so all degrees remain
even.
The pick location remains connected to the rest of
the tour through the remaining double edge leading inward.
Since $a_i$ is not the depot (and is not required to be visited in this
case), removing it from the tour does not violate the requirement
to visit all required locations.
Thus we obtain a feasible tour subgraph of strictly smaller length,
contradicting that $T$ is minimum-length.

\paragraph{Case 2: Outer-connected.}
Fig.~\ref{fig:double_outer} shows this case and the resulting
transformation $T\rightarrow T'$, which produces an inner-connected
double edge $(a_{i-1},b_{i-1})^2$ in the adjacent aisle segment.
By Case~1, this is reducible unless its outer endpoint is the depot.
To see that a reducible direction always exists, note that state
$(0,1)$ gives $b_i$ a single horizontal edge to the left, and even
degree at $b_i$ forces a single horizontal edge to the right also.
The transformation can therefore be applied in either direction about aisle $i$.
Toward aisle $i-1$, the preceding segment contains a single edge
(as assumed), so $T'$ has an inner-connected double edge as above.
Toward aisle $i+1$, Lemma~\ref{lem:single} either directly reduces
the double edge (if no single edge is present in that segment)
or again yields an inner-connected double edge.
Since these two directions produce distinct outer endpoints and
there is only one depot, at least one direction gives an outer
endpoint that is not the depot, and Case~1 applies.

\paragraph{Case 3: Inner-connected with \texorpdfstring{$a_i =$}{ai =} depot.}
This is the remaining inner-connected possibility excluded from Case~1
only because we cannot delete the boundary segment incident to the depot.
Apply the transformation of Fig.~\ref{fig:double} to obtain $T'$.
In $T'$, the double edge becomes $(a_{i-1},b_{i-1})^2$.
If it is connecting, it is unnecessary by Theorem~\ref{thm:connecting}.
If it is non-connecting it is outer-connected and falls under Case~2 which can be eliminated.
In either subcase we obtain a feasible tour subgraph of length no greater
than $T$ with fewer boundary-subaisle double edges, contradicting the
choice of $T$.

\medskip

\noindent
All possible double edges in boundary subaisles are therefore
shown to be unnecessary.
\end{proof}

Single-block and two-block layouts consist
only of boundary subaisles (Fig.~\ref{fig:structural_twoblock}).
Hence Theorem~\ref{thm:main} immediately implies
that \emph{no} double traversals are required anywhere
in these layouts when there are two or more non-empty aisles.
For the remaining single-block case of exactly one non-empty aisle,
any double edge is necessarily inner-connected with the depot at
one outer endpoint.
The opposite outer endpoint is not the depot, so the boundary
segment on that side can be removed as in Case~1
(Fig.~\ref{fig:double_inner}), eliminating the double edge.
This recovers the strict condition of \citet{revenant2025note}
for all single-block warehouses regardless of the number of
non-empty aisles.
For two-block warehouses, which are widely used in practice
\cite{roodbergen2001routing}, the result is entirely new as
no strict structural result on double traversals has
previously been established for them.

\begin{figure}[ht]
\centering
\begin{tikzpicture}[
    scale=1,
    every node/.style={circle, draw=black, minimum size=6mm},
    execute at end picture={\path[use as bounding box] (-7, 0.7) rectangle (5, -5);}]
\clip (-7, 0.7) rectangle (5, -5);

\def\nAisles{4}
\def\nRows{3}

\tikzset{
  dbl/.style={-, draw=black, double,
  double distance=5pt
  }
}

\fill[gray!20] (-2.5,-0.5) rectangle (-\nAisles-2.5,-3.5);

\foreach \i in {1,...,\nAisles} {
    \foreach \j in {1,...,2} {
        \node[fill=white] (s\i\j) at (-2-\i,1-2*\j) {};
    }
}

\foreach \i in {1,...,\nAisles} {
    \foreach \j in {1,...,1} {
        \pgfmathtruncatemacro{\nextj}{\j+1}
        \draw[dbl] (s\i\j) -- (s\i\nextj);
    }
}

\foreach \j in {1,...,2} {
    \foreach \i in {1,...,\numexpr\nAisles-1\relax} {
        \pgfmathtruncatemacro{\nexti}{\i+1}
        \draw[dbl] (s\i\j) -- (s\nexti\j);
    }
}

\fill[gray!20] (0.5,-0.5) rectangle (\nAisles+0.5,-1.8);
\fill[gray!20] (0.5,-2.2) rectangle (\nAisles+0.5,-3.50);

\foreach \i in {1,...,\nAisles} {
    \foreach \j in {1,...,\nRows} {
        \node[fill=white] (v\i\j) at (\i,-\j) {};
    }
}

\foreach \i in {1,...,\nAisles} {
    \foreach \j in {1,...,\numexpr\nRows-1\relax} {
        \pgfmathtruncatemacro{\nextj}{\j+1}
        \draw[dbl] (v\i\j) -- (v\i\nextj);
    }
}

\foreach \j in {1,...,\nRows} {
    \foreach \i in {1,...,\numexpr\nAisles-1\relax} {
        \pgfmathtruncatemacro{\nexti}{\i+1}
        \draw[dbl] (v\i\j) -- (v\nexti\j);
    }
}

\draw[-, thick, dashed] (-\nAisles-2.5,-3.75) -- (\nAisles+0.5,-3.75);
\draw[-, thick, dashed] (-\nAisles-2.5,-0.25) -- (\nAisles+0.5,-0.25);

\node[draw=none, align=center] (label) at (-1,-2) {boundary\\subaisles};
\draw[->, thick] (label.east) -- (0.5, -1.2);
\draw[->, thick] (label.east) -- (0.5, -2.8);
\draw[->, thick] (label.west) -- (-2.5, -2);

\node[draw=none] (front) at (-1, -4.25) {front of warehouse};
\node[draw=none] (back) at (-1, 0.25) {back of warehouse};

\end{tikzpicture}
\caption{
In single-block (left) and two-block (right) rectangular warehouses,
all subaisles are boundary subaisles (shaded). Hence Theorem~\ref{thm:main}
implies that no double edges are required in a minimal tour for either
layout when there are two or more non-empty aisles (Corollary~\ref{cor:unify}).}
\label{fig:structural_twoblock}
\end{figure}

\begin{cor}
\label{cor:unify}
In a rectangular warehouse,
double edges are never required in
a minimal tour of either
(i) a single-block layout with any number of non-empty aisles,
or (ii) a two-block layout with two or more non-empty aisles.
\end{cor}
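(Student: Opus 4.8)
The plan is to obtain the corollary from Theorem \ref{thm:main} together with a single structural observation about block layouts. In a single-block layout there are exactly two cross-aisles and one tier of subaisles, each of which is simultaneously the uppermost and lowermost subaisle of its aisle and hence outer; in a two-block layout there are three cross-aisles and two tiers, the upper tier adjacent to the top cross-aisle and the lower tier adjacent to the bottom cross-aisle, both of which are outer. Thus in either layout every subaisle is an outer subaisle, so any double edge $(a_i,b_i)^2$ that can occur necessarily spans an upper or lower subaisle and therefore falls within the scope of Theorem \ref{thm:main}.

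First I would treat case (ii) and the subcase of (i) with two or more non-empty aisles together, since both satisfy the hypothesis of Theorem \ref{thm:main}. That theorem eliminates a single outer double edge, whereas the corollary asks for a minimal tour with \emph{no} double edge, so I would lift the per-edge result by an extremal argument: pass to a minimal tour minimizing an appropriate measure of its full double edges (their number, or total length). If such a tour still contained a double edge, that edge would lie in an outer subaisle by the structural observation, and the construction in the proof of Theorem \ref{thm:main} would return a tour of no greater length in which that double edge no longer appears, strictly decreasing the measure and contradicting minimality. Hence some minimal tour contains no double edge at all.

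The remaining subcase is a single-block layout with exactly one non-empty aisle, which is not covered by Theorem \ref{thm:main} and must be handled directly. Here the lone subaisle is bounded by cross-aisle vertices $a_i$ and $b_i$, at most one of which is the depot. If a minimal tour contained the full double edge $(a_i,b_i)^2$, then taking the non-depot endpoint in the role of $a_i$ in Case 1 of the proof of Theorem \ref{thm:main} removes the double segment between it and the nearest pick, strictly shortening the tour; so no full double edge is required. To explain why (ii) excludes the analogous single-aisle layout while (i) does not, I would recall from Section \ref{prerequisites} that in a multi-block layout the depot and the farthest pick may lie in distinct subaisles, so the traversal joining them forces a double edge in the separating subaisle.

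I expect the main difficulty to be definitional rather than computational: the argument rests on the claim that single- and two-block layouts contain only outer subaisles and that every admissible double edge genuinely includes one of them, so that Theorem \ref{thm:main} applies with no gaps, together with the extremal step that lifts the per-edge elimination to a tour free of double edges. Verifying that this step terminates on a finite grid, and that the single-aisle single-block case is correctly separated out, are the two points I would be most careful about.
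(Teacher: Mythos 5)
Your proposal is correct and follows essentially the same route as the paper: observe that single- and two-block layouts contain only outer subaisles, invoke Theorem \ref{thm:main} when there are two or more non-empty aisles, and dispatch the single-aisle single-block case directly via Case 1 with the non-depot endpoint. The extremal step you add to lift the per-edge elimination to a tour with no double edges is a sensible formalization of something the paper leaves implicit, but it does not change the substance of the argument.
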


Corollary~\ref{cor:unify} guarantees that the
double-traversal configuration can be excluded
from every subaisle in single-block warehouses and in
two-block warehouses with two or more non-empty aisles,
reducing the number of admissible vertical configurations
from six to five at each subaisle without loss of optimality.
The case of a single non-empty aisle in a two-block layout is
trivial as the optimal tour is always a single double edge from the depot
to the farthest pick location.
Because these configurations are enumerated combinatorially
in dynamic programming
\cite{ratliff1983order, roodbergen2001routing, pansart2018exact}
and encoded as decision variables in
mathematical programming formulations
\cite{goeke2021modeling},
this warehouse-wide reduction directly yields
smaller state spaces and tighter models.

\section{Conclusion}
We have proved that double traversals are not required
in the boundary subaisles of rectangular warehouses
with at least two non-empty aisles,
establishing the first strict structural result
for two-block rectangular warehouses
and consolidating the known single-block condition
of \citet{revenant2025note}
within a broader framework.
For these layouts, the number of admissible vertical
configurations at each subaisle is reduced from six to five
without loss of optimality,
directly benefiting dynamic programming algorithms
and mathematical programming formulations
through smaller state spaces and fewer decision variables.
The characterization of necessary double edges,
which are now known to be limited to non-connecting doubles
in interior subaisles of warehouses with three or more blocks,
remains an open question for further research.

\clearpage

\section*{Declarations}

\paragraph{Funding}
George Dunn was supported by an Australian Government
Research Training Program (RTP) Scholarship. No other
funding was received for this work.

\paragraph{Competing interests}
The authors have no competing interests to declare
that are relevant to the content of this article.

\paragraph{Data availability}
This work is purely theoretical and does not use
empirical datasets. No data were generated or analysed.

\bibliography{bibliography}

@article{dunn2025double,
    title = {Double traversals in optimal picker routes for warehouses with multiple blocks},
    journal = {Operations Research Letters},
    volume = {65},
    pages = {107397},
    year = {2026},
    issn = {0167-6377},
    doi = {https://doi.org/10.1016/j.orl.2025.107397},
    author = {George Dunn and Hadi Charkhgard and Ali Eshragh and Elizabeth Stojanovski},
}

@article{goeke2021modeling,
    title={Modeling single-picker routing problems in classical and modern warehouses},
    author={Goeke, Dominik and Schneider, Michael},
    journal={INFORMS Journal on Computing},
    volume={33},
    number={2},
    pages={436--451},
    year={2021},
    publisher={INFORMS}
}

@article{pansart2018exact,
    title = {Exact algorithms for the order picking problem},
    journal = {Computers \& Operations Research},
    volume = {100},
    pages = {117-127},
    year = {2018},
    issn = {0305-0548},
    doi = {https://doi.org/10.1016/j.cor.2018.07.002},
    author = {Lucie Pansart and Nicolas Catusse and Hadrien Cambazard}
}

@article{ratliff1983order,
    title={Order-picking in a rectangular warehouse: a solvable case of the traveling salesman problem},
    author={Ratliff, H Donald and Rosenthal, Arnon S},
    journal={Operations research},
    volume={31},
    number={3},
    pages={507--521},
    year={1983},
    publisher={INFORMS}
}

@article{revenant2025note,
    title={A note about a transition of Ratliff and Rosenthal's order picking algorithm for rectangular warehouses},
    author={Revenant, Paul and Cambazard, Hadrien and Catusse, Nicolas},
    journal={Operations Research Letters},
    volume={62},
    pages={107325},
    year={2025},
    publisher={Elsevier},
    doi={https://doi.org/10.1016/j.orl.2025.107325}
}

@article{roodbergen2001routing,
    title = {Routing order pickers in a warehouse with a middle aisle},
    journal = {European Journal of Operational Research},
    volume = {133},
    number = {1},
    pages = {32-43},
    year = {2001},
    issn = {0377-2217},
    doi = {https://doi.org/10.1016/S0377-2217(00)00177-6},
    author = {Kees Jan Roodbergen and René {de Koster}}
}

\end{document}